\newtheorem{thm}{Theorem}[section]
\newtheorem{prop}[thm]{Proposition}
\newtheorem{defn}[thm]{Definition}
\theoremstyle{remark}
\newtheorem{exam}[thm]{Example}
\def \N {\mathbb N}
\def \Z {\mathbb Z}
\def \R {\mathbb R}
\def \F {\mathcal F}
\def \P {\mathcal P}
\def \M {\mathcal M}
\def \mtx {\mathcal M_T(X)}
\def \msy {\mathcal M_S(Y)}
\def \mtxe {\mathcal M^{\mathsf e}_T(X)}
\def \xt {$(X,T)$}
\def \xmt {$(X,\mu,T)$}
\def \yns {$(Y,\nu,S)$}
\def \xtp {$(\bar X,\bar T)$}
\def \mtxp {\mathcal M_{\bar T}(\bar X)}
\def \eps {\varepsilon}
\def \supp {\mathsf{supp}}
\def \sq {sequence}
\def \xsm {$(X,\Sigma,\mu)$}
\def \xsmt {$(X,\Sigma,\mu,T)$}
\def \xsmp {$(\bar X,\bar\Sigma,\bar\mu)$}
\def \xsmtp {$(\bar X,\bar\Sigma_{\bar\mu}, \bar\mu,\bar T)$}
\def \ys {$(Y,S)$}
\def \tl {topological}
\def \im {invariant measure}
\def \inv {invariant}
\def \mpt {measure-preserving system}
\def \ds {dynamical system}
\def \usc {upper semicontinuous}
\def \lsc {lower semicontinuous}
\def \mic {measure-theoretic}
\numberwithin{equation}{section}
\begin{document}

\title{When all points are generic for ergodic measures}

\author{Tomasz Downarowicz and Benjamin Weiss}

\address{\vskip 2pt \hskip -12pt Tomasz Downarowicz}

\address{\hskip -12pt Faculty of Pure and Applied Mathematics, Wroc\l aw University of Technology, Wroc\l aw, Poland}

\email{downar@pwr.edu.pl}

\medskip
\address{\vskip 2pt \hskip -12pt Benjamin Weiss}

\address{\hskip -12pt Einstein Institute of Mathematics,
The Hebrew University of Jerusalem,
Jerusalem, Israel}

\email{weiss@math.huji.ac.il}

\begin{abstract}We establish connections between several properties of \tl\ \ds s, such as: \newline -- every point is generic for an ergodic measure,
\newline -- the map sending points to the measures they generate is continuous,
\newline -- the system splits into uniquely (alternatively, strictly) ergodic subsystems, \newline -- the map sending ergodic measures to their \tl\ supports is continuous,
\newline -- the Ces\`aro means of every continuous function converge uniformly.
\end{abstract}

\maketitle
\section{Introduction}

  It follows from the pointwise ergodic theorem that for a \tl\ \ds\ \xt\ with an ergodic invariant measure $\mu$,  a.e.\ point $x \in X$ 
is generic for $\mu$, i.e.
for any continuous function $f:X\to\R$, the Ces\`aro means
$$
\frac1n\sum_{i=0}^{n-1}f(T^ix)
$$
converge to $\int f\,d\mu$. 
  In 1952, Oxtoby \cite{O} showed that in a uniquely ergodic \tl\ \ds\ \xt\ every point $x\in X$ is generic for the unique \im\ $\mu$ carried by $X$, and moreover the 
convergence, mentioned above, is uniform.  This implies, in particular, that any \tl\ system \xt\ which splits as a disjoint union of uniquely ergodic subsystems, has the property that every point $x\in X$ is generic for an ergodic measure (we will say that \xt\ is \emph{pointwise ergodic}). The converse need not hold: in 1981, Y.\ Katznelson and B.\ Weiss \cite{KW}, provided an example of a pointwise ergodic system with uncountably many ergodic measures, but just one minimal subset (this makes  splitting into uniquely ergodic subsystems impossible).

In 1970, W.\ Krieger, following R.\ Jewett, established that every ergodic \mpt\ has a strictly ergodic (i.e.\ uniquely ergodic and minimal) \tl\ model. It is worth noticing, that once a uniquely ergodic model is found, a strictly ergodic model is easily obtained by taking the (unique) minimal subsystem of that model. Four years later, G.\ Hansel generalized the Jewett--Krieger Theorem to nonergodic systems as follows: every \mpt\ has a \tl\ model \xt\ which splits as a union of strictly ergodic subsystems, moreover,
the Ces\'aro means of any continuous function converge uniformly on $X$ (he called the latter property \emph{uniformity}). It has to be pointed out, that in the nonergodic case, finding a model which splits into uniquely ergodic subsystems is by far insufficient: the minimal subsystems of the uniquely ergodic components need not unite to a closed set, and the uniformity, which holds individually on each uniquely ergodic component, need not coordinate to a global uniformity on~$X$. Thus, a model which splits into uniquely ergodic subsystems neither automatically provides a model which splits into strictly ergodic subsystems, nor one which is uniform. This is why Hansel's result can be considered much stronger than just a straightforward generalization of the Jewett--Krieger Theorem.

We are interested in studying these properties of non-uniquely ergodic \tl\ \ds\ which in the uniquely (alternatively, strictly) ergodic case are automatic:
\begin{itemize}
	\item pointwise ergodicity,
	\item splitting into uniquely ergodic subsystems,
	\item splitting into strictly ergodic subsystems,
	\item uniformity.
\end{itemize} Two more properties emerge naturally in the context of pointwise ergodicity (both of them hold trivially in uniquely ergodic systems):
\begin{itemize}
	\item continuity of the mapping that associates to each point $x\in X$ the ergodic measure for which $x$ is generic, and
	\item continuity of the set-valued function that associates to each ergodic measure  its
	\tl\ support.
\end{itemize}
In this paper we establish a diagram of implications (and equivalences) between these properties (and some of their conjunctions). Our culminating result is proving a characterization of uniformity in terms of the \tl\ organization of the uniquely ergodic components.
\section{Some terminological conventions}
Throughout this paper, by a \emph{\tl\ \ds} we will mean a pair \xt, where $X$ is a compact metric space and $T:X\to X$ is a homeomorphism. By $\M(X)$ and $\mtx$ we will denote the collection of all Borel probability measures on $X$ and of all \inv\footnote{In a \tl\ \ds\ \xt, a measure $\mu$ on $X$ is \emph{\inv} if $\int f\,d\mu = \int f\circ T\,d\mu$, for every continuous $f:X\to\R$.} Borel probability measures on $X$, respectively. By saying ``measure'' we will always mean a Borel probability, since no other measures will be considered.

It is well known that $\mtx$ endowed with the weak-star topology is a nonempty metrizable simplex\footnote{A metrizable \emph{simplex} is a compact convex metric space, such that every point is the barycenter of a unique probability measure supported by the set of extreme points.} and that the extreme points of $\mtx$ are the ergodic measures\footnote{A measure $\mu\in\mtx$ is \emph{ergodic} if all $T$-invariant Borelsets have measure zero or one.}. The set of ergodic measures will be denoted by $\mtxe$. When $\mtx$ is a singleton (equivalently, when $\mtx=\mtxe$), the system \xt\ is called \emph{uniquely ergodic}. A system \xt\ is called \emph{strictly ergodic} if it is uniquely ergodic and minimal\footnote{A \tl\ \ds\ is \emph{minimal} if it contains no proper closed \inv\ subsets.}. It is elementary to see that if \xt\ is uniquely ergodic then $X$ contains a unique minimal subset $M$ which equals the \tl\ support\footnote{Topological support of a measure is the smallest closed set of measure 1.} of the unique \im. Then $(M,T)$\footnote{We write $T$, but in fact, we mean here the restriction of $T$ to $M$.} is a strictly ergodic system and $\mtx=\M_T(M)$.

By a \emph{\mpt} we will understand a quadruple \xsmt, where \xsm\ is a standard (Lebesgue) probability space and $T:X\to X$ is a measure automorphism.
When a \tl\ \ds\ \xt\ is considered with a fixed \im\ $\mu\in\mtx$, it becomes a \mpt\ \xmt. The indication of the sigma-algebra is omitted intentionally, as this role will always be played by the sigma-algebra of Borel sets of $X$ (technically, in order to have a standard probability space, we need to consider the Borel sigma-algebra completed with respect to $\mu$).

Two \mpt s, say \xsmt\ and \xsmtp, are said to be \emph{isomorphic} if there exists an isomorphism $\phi:X\to\bar X$ of the measure spaces \xsm\ and \xsmp, which intertwines the actions of $T$ and $\bar T$, i.e.\ such that $\phi\circ T=\bar T\circ\phi$. In such case, by a commonly accepted abuse of terminology, we will briefly say that $\mu$ and $\bar\mu$ are isomorphic.

Two \tl\ \ds s \xt\ and \xtp\ are \emph{\tl ly conjugate} (briefly \emph{conjugate}) if there exists a homeomorphism $\phi:X\to\bar X$ which intertwines the actions of $T$ and $\bar T$.

A \tl\ \ds\ \xtp\ is called a \emph{model} for a \mpt\ \xsmt, if there exists an \im\ $\bar\mu\in\mtxp$ isomorphic to $\mu$. If \xtp\ is also uniquely ergodic, we call it a \emph{uniquely ergodic model} of \xsmt. If \xt\ is strictly ergodic, we call it a \emph{strictly ergodic} (or \emph{Jewett--Krieger}) \emph{model} of \xsmt. If \xtp\ is a uniquely ergodic model of \xsmt, then $(\bar M,\bar T)$ is a strictly ergodic model of \xsmt,
where $\bar M$ is a unique minimal subset of $\bar X$.

\section{Upper and lower semicontinuous partitions and multifunctions}
Let $X$ be a compact metric space. By $2^X$ we denote the space of all compact subsets of $X$ topologized by the Hausdorff metric (then $2^X$ is also a compact metric space).

\begin{defn}
A partition $\P$ of $X$ with closed atoms is \emph{upper} (resp.\ \emph{lower}) \emph{semicontinuous} if, whenever $(A_n)_{n\ge 1}$ is a \sq\ of atoms of $\P$, converging (in the Hausdorff metric) to a set $S$, then $S$ is contained in (resp.\ contains) an atom of $\P$. If $\P$ is both upper and \lsc, it is called \emph{continuous}.
\end{defn}

The following facts are true. The last statement, for which we give no reference, is an easy exercise.

\begin{prop}\label{quotient}\hfill
\begin{enumerate}
	\item \cite[Theorem 3-31]{HY} Let $f:X\to Y$ be any function between any spaces. Define the \emph{fiber partition}
	as the partition of $X$ whose atoms are the sets $f^{-1}(y)$, $y\in Y$. If $X$ and
	$Y$ are compact metric and $f$ is continuous then the fiber partition has closed atoms and
	is \usc.
	\item \cite[Theorem 3-34]{HY} Any partition $\P$ of a compact
	metric space $X$, with closed atoms and \usc, equals the fiber partition of some
	continuous function $f:X\to Y$ onto a compact metric space $Y$.
	\item The common refinement of any collection of \usc\ partitions with
	closed atoms has closed atoms and is \usc.
\end{enumerate}
\end{prop}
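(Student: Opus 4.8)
The proposition splits into three statements. The first two, relating \usc\ partitions with closed atoms to fiber partitions of continuous maps, are the cited results of Hocking and Young, so here I will only recall the one-line argument for (1) and leave (2) to its reference. For (1): if $f\colon X\to Y$ is continuous with $X,Y$ compact metric, each fiber $f^{-1}(y)$ is closed; and if fibers $f^{-1}(y_n)$ converge in $2^X$ to a set $S$, then, extracting a subsequence along which $y_n\to y$ (possible since $Y$ is compact), every $s\in S$ is a limit of points $a_n\in f^{-1}(y_n)$, whence $f(s)=\lim f(a_n)=\lim y_n=y$, so $S\subseteq f^{-1}(y)$. Thus the real content is (3), and that is what the plan below addresses.

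For (3), write the family as $\{\P_i\}_{i\in I}$ and set $\P=\bigvee_{i\in I}\P_i$, so the atom through a point $x$ is $\P(x)=\bigcap_{i\in I}\P_i(x)$, where $\P_i(x)$ denotes the atom of $\P_i$ containing $x$. That the atoms of $\P$ are closed is immediate, being intersections of closed sets. For upper semicontinuity I would start from a \sq\ of atoms $B_n$ of $\P$ converging in the Hausdorff metric to a set $S$ (automatically a nonempty compact set, as a Hausdorff limit of nonempty compact sets), and reduce the goal to a statement equivalent to it because $S$ is nonempty: for each $i\in I$ separately, $S$ is contained in a single atom of $\P_i$. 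Granting this, pick any $s\in S$; then $S\subseteq\P_i(s)$ for every $i$, hence $S\subseteq\bigcap_i\P_i(s)=\P(s)$, which is an atom of $\P$, and we are done.

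The key step is therefore the claim that, for a fixed $i$, $S$ lies in one atom of $\P_i$. This is where a little care is needed, since $I$ may be uncountable and there is no simultaneous diagonal extraction over all $i$; the resolution is that none is needed, because the claim is established for each $i$ in isolation. For each $n$, let $A_n$ be the unique atom of $\P_i$ containing $B_n$, namely $\P_i(y)$ for any $y\in B_n$. By compactness of $2^X$, some subsequence $A_{n_k}$ converges to a closed set $C$, which by upper semicontinuity of $\P_i$ is contained in an atom of $\P_i$. Since $B_{n_k}\to S$ (a subsequence of a convergent sequence) and $B_{n_k}\subseteq A_{n_k}$, it follows that $S\subseteq C$: any $s\in S$ is a limit of points $b_k\in B_{n_k}\subseteq A_{n_k}$, and $d(b_k,C)\le d_H(A_{n_k},C)\to 0$, so $d(s,C)=0$ and $s\in C$. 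This gives the claim and completes the proof. (A slicker but less self-contained alternative: use (2) to realize each $\P_i$ as the fiber partition of a continuous $f_i\colon X\to Y_i$, form $f=(f_i)_{i\in I}\colon X\to\prod_{i}Y_i$, observe that $f(X)$ is metrizable — being a compact Hausdorff continuous image of a compact metric space — and apply (1), since $\P$ is exactly the fiber partition of $f$.)
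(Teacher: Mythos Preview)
Your proof is correct. The paper itself does not prove this proposition: parts (1) and (2) are attributed to Hocking--Young, and part (3) is simply declared ``an easy exercise'' with no argument given, so there is nothing to compare against. Both your direct argument for (3) (per-index subsequential extraction in $2^X$) and the product-map alternative via (2) are valid ways to fill in that exercise.
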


\begin{defn}

A set-valued function $f:Y\to 2^X$, where $X$ and $Y$ are compact metric spaces, is \emph{upper} (resp.\ \emph{lower}) \emph{semicontinuous} if, whenever $(y_n)_{n\ge1}$ is a \sq\ in $Y$ converging to some $y\in Y$ and  $f(y_n)\to S\in 2^X$, then $S$ is contained in (resp.\ contains) $f(y)$.
\end{defn}

It is elementary to see, that a function $f:Y\to2^X$ is continuous if and only if it is both upper and \lsc.

\smallskip
Note the following obvious fact:
\begin{prop}\label{banal}
If $f:Y\to 2^X$, where $X$ and $Y$ are compact metric spaces, is upper (resp.\ lower) semicontinuous, and the collection of values, $\{f(y):y\in Y\}$, happens to be a partition of $X$, then this partition is upper (resp.\ lower) semicontinuous.
\end{prop}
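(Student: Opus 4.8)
The plan is to reduce the statement about the partition directly to the semicontinuity hypothesis on $f$, using compactness of $Y$ to convert an arbitrary convergent sequence of atoms into a situation to which the definition of semicontinuity of $f$ applies. Assume $f$ is upper semicontinuous and that $\P:=\{f(y):y\in Y\}$ is a partition of $X$; note that its atoms are automatically closed, since $f$ takes values in $2^X$. Let $(A_n)_{n\ge1}$ be a sequence of atoms of $\P$ converging in the Hausdorff metric to some set $S$; we must produce an atom of $\P$ containing $S$.

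First I would pick, for each $n$, a point $y_n\in Y$ with $f(y_n)=A_n$ — possible because every atom of $\P$ is, by definition of $\P$, a value of $f$. Since $Y$ is a compact metric space, the sequence $(y_n)$ admits a subsequence $(y_{n_k})$ converging to some $y\in Y$. Along this subsequence we still have $f(y_{n_k})=A_{n_k}\to S$ in $2^X$ (a subsequence of a Hausdorff-convergent sequence converges to the same limit, $2^X$ being metric). Upper semicontinuity of $f$ at $y$ then yields $S\subseteq f(y)$, and $f(y)$ is an atom of $\P$, which is precisely upper semicontinuity of $\P$. The lower semicontinuous case is verbatim the same argument, with ``$S\subseteq f(y)$'' replaced by ``$S\supseteq f(y)$'' and ``contained in'' replaced by ``contains''.

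I do not anticipate any real obstacle: the only step needing a word of care is the extraction of a convergent subsequence of $(y_n)$, which is exactly where the standing hypothesis that $Y$ is compact is used — without it the atoms $f(y_n)$ could accumulate on a set not enclosed in any single atom, and the conclusion would fail. The legitimacy of working with sequences rather than nets is ensured by the metrizability of $2^X$ recorded at the beginning of this section.
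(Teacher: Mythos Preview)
Your argument is correct; the paper itself gives no proof, presenting the proposition as an ``obvious fact,'' so your write-up simply makes explicit the intended one-line reasoning (compactness of $Y$ to extract a convergent subsequence, then apply the definition).
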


We remark that the converse need not hold. Indeed, suppose that a set-valued function $f:Y\to 2^X$ is such that $\{f(y):y\in Y\}$ is an upper (resp.\ lower) semicontinuous partition of $X$. For any bijection $g:X\to X$, the partition by images of the composition $f\circ g$ is the same as that for $f$, while the composition may destroy any continuity properties of the function.
\smallskip

A very useful class of \lsc\ set-valued functions is provided by the following fact, whose elementary proof we skip:

\begin{prop}Let $X$ be a compact metric space and let $\M(X)$ denote the set of all probability measures on $X$, equipped with the weak-star topology. Then the function $\supp:\M(X)\to 2^X$, assigning to each measure its \tl\ support, is \lsc.
\end{prop}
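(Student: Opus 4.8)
The plan is to verify the definition of lower semicontinuity directly for $f=\supp$. So I would start from a \sq\ $\mu_n\to\mu$ in the weak-star topology of $\M(X)$ together with the hypothesis $\supp(\mu_n)\to S$ in the Hausdorff metric on $2^X$, and aim to show $\supp(\mu)\subset S$. Since $S$ is closed, this reduces to proving that $\mathrm{dist}(x,S)<\eps$ for every $x\in\supp(\mu)$ and every $\eps>0$, and then letting $\eps\to 0$.

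Fix such an $x$ and $\eps$, and consider the open ball $B=\{y\in X: d(x,y)<\eps/2\}$. Because $x$ lies in the topological support of $\mu$, every open neighborhood of $x$ has positive $\mu$-measure; in particular $\mu(B)>0$. Here I would invoke the open-set form of the portmanteau theorem: weak-star convergence $\mu_n\to\mu$ gives $\liminf_n\mu_n(B)\ge\mu(B)>0$, so $\mu_n(B)>0$ for all $n$ large enough. For each such $n$, the open set $B$ cannot be disjoint from $\supp(\mu_n)$ — otherwise $B$ would be contained in the complement of the support, which is a $\mu_n$-null set — hence $B$ contains a point of $\supp(\mu_n)$, i.e.\ $\mathrm{dist}\big(x,\supp(\mu_n)\big)<\eps/2$ for all large $n$.

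On the other hand, Hausdorff convergence $\supp(\mu_n)\to S$ provides, for all large $n$, the inclusion of $\supp(\mu_n)$ in the $(\eps/2)$-neighborhood of $S$, so that $\mathrm{dist}(y,S)<\eps/2$ for every $y\in\supp(\mu_n)$. Combining the two estimates through the triangle inequality, for any sufficiently large $n$ we get $\mathrm{dist}(x,S)\le\mathrm{dist}\big(x,\supp(\mu_n)\big)+\sup_{y\in\supp(\mu_n)}\mathrm{dist}(y,S)<\eps$. As $\eps>0$ was arbitrary and $S$ is closed, $x\in S$; since $x\in\supp(\mu)$ was arbitrary, $\supp(\mu)\subset S$, which is precisely lower semicontinuity. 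There is no genuine obstacle in this argument; the only point that requires a little care is to use the \emph{open}-set (equivalently, open-ball) version of weak-star convergence, since the analogous inequality for closed sets points the wrong way, and correspondingly to keep the balls open when deducing positivity of $\mu_n(B)$.
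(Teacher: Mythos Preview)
Your argument is correct: the portmanteau inequality for open sets gives $\mu_n(B)>0$ eventually, hence $B$ meets $\supp(\mu_n)$, and the Hausdorff bound then pushes $x$ into every $\eps$-neighborhood of $S$. The triangle-inequality estimate you wrote,
\[
\mathrm{dist}(x,S)\le \mathrm{dist}\bigl(x,\supp(\mu_n)\bigr)+\sup_{y\in\supp(\mu_n)}\mathrm{dist}(y,S),
\]
is valid (pick $y\in\supp(\mu_n)$ realizing the first distance, then bound $\mathrm{dist}(y,S)$ by the supremum), so the conclusion $\supp(\mu)\subset S$ follows.

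As for comparison: the paper does not give a proof of this proposition at all --- it is stated with the remark that the elementary proof is skipped. Your write-up is exactly the standard verification one would expect, and the caveat you flag (use the open-set direction of portmanteau, not the closed-set one) is the only place where a careless reader could slip.
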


\section{Strict uniformity and related notions}
Our goal is to introduce and compare several properties, of similar flavor, of \tl\ \ds s. Although none of these properties is new and they appear in various contexts in the literature, to our knowledge they have never been given short names (exception: ``semi-simple''). For convenience of our further discussion, we introduce such names below.
\begin{defn}
A \tl\ \ds\ \xt\ will be called:
\begin{enumerate}[\rm(I)]
	\item \emph{Pointwise ergodic}, if every point $x\in X$ is generic for an ergodic measure.
	In this case, we let $\Phi:X\to\mtxe$ denote the map associating to each point $x\in X$
	the ergodic measure for which $x$ is generic. Also, we let $\P$ denote the fiber partition
	of the function $\Phi$.
	\item \emph{Semi-simple}, if $X$ is a union of minimal sets.
	\item \emph{Partitionable}, if $X$ is a disjoint union of closed \inv\ uniquely ergodic
	sets (we will say \emph{uniquely ergodic subsystems}).
	\item \emph{Strictly partitionable}, if $X$ is a union of strictly ergodic subsystems.
	\item \emph{Continuously pointwise ergodic} if it is pointwise ergodic and 	
	$\Phi$ is continuous.
	\item \emph{Continuously strictly pointwise ergodic} if it is continuously pointwise
	ergodic and semi-simple.
\end{enumerate}
\end{defn}

We begin the analysis of the connections between the notions (I)--(VI) by some fairly obvious observations:

\begin{prop}\label{red}\hfill
\begin{enumerate}
	\item A system \xt\ is partitionable if and only if it is pointwise ergodic and $\P$
	has closed atoms.
	\item A system \xt\ is strictly partitionable if and only if it is pointwise ergodic  and
	semi-simple.
	\item A system \xt\ is continuously pointwise ergodic if and only if it is partitionable
	and $\P$ and is \usc.
	\item A system \xt\ is continuously strictly pointwise ergodic if and only if it is
	strictly partitionable and $\P$ is \usc.
	\item If \xt\ is continuously pointwise ergodic then $\mtxe$ is compact.
\end{enumerate}
\end{prop}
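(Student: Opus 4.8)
The plan is to prove the five items in turn, each reducing to a characterization already available. For item (1): if \xt\ is pointwise ergodic and $\P$ has closed atoms, each atom $\P(x) = \Phi^{-1}(\Phi(x))$ is a closed set, and it is $T$-invariant because $x$ and $Tx$ generate the same measure (the Ces\`aro averages along $x$ and $Tx$ differ by a single term). On the atom $\P(x)$, every point is generic for the single measure $\Phi(x)$; in particular $\M_T(\P(x))$ has only one element (a measure on $\P(x)$ with another ergodic component would force some point to be generic for that other component, or to be non-generic), so $\P(x)$ is a uniquely ergodic subsystem, and $X$ is their disjoint union. Conversely, if $X = \bigsqcup_\alpha X_\alpha$ with each $X_\alpha$ closed, invariant, uniquely ergodic, then by Oxtoby's theorem every point of $X_\alpha$ is generic for the unique measure $\mu_\alpha$ carried by $X_\alpha$, which is ergodic; hence \xt\ is pointwise ergodic, $\Phi$ is constant on each $X_\alpha$, and since distinct $X_\alpha$ carry distinct measures, the atoms of $\P$ are exactly the $X_\alpha$, which are closed. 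Item (2) is immediate from the definitions: strictly partitionable means $X$ is a union of strictly ergodic (uniquely ergodic and minimal) subsystems; the ``union of minimal sets'' part is precisely semi-simplicity, and by item (1)-type reasoning (or directly, since strict ergodicity implies unique ergodicity) the remaining content is pointwise ergodicity. One must check that being a union of strictly ergodic subsystems is the same as being pointwise ergodic and semi-simple: given the latter, each minimal subset is uniquely ergodic — any point in it is generic for an ergodic measure $\mu$ whose support is a closed invariant subset, hence all of the minimal set, and unique ergodicity of a minimal set follows since every point being generic for a fixed measure forces uniqueness.

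For item (3): by item (1), partitionable is equivalent to pointwise ergodic with $\P$ having closed atoms, so it remains to show that, under pointwise ergodicity with closed atoms, continuity of $\Phi$ is equivalent to upper semicontinuity of $\P$. If $\Phi$ is continuous, then $\P$ is its fiber partition, which is \usc\ by Proposition \ref{quotient}(1). Conversely, suppose $\P$ is \usc\ (with closed atoms). We must show $\Phi:X\to\mtxe$ is continuous. Take $x_n\to x$; by compactness of $\mtx$ pass to a subsequence with $\Phi(x_n)\to\mu$ for some $\mu\in\mtx$, and (passing to a further subsequence, using compactness of $2^X$) with $\P(x_n)\to S\in 2^X$. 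Since $\P$ is \usc, $S$ is contained in an atom $\P(y)$ of $\P$; as $x\in S$ (it is the limit of $x_n\in\P(x_n)$), we get $\P(y)=\P(x)$, so $S\subseteq\P(x)$. Now $\supp(\Phi(x_n))\subseteq\P(x_n)$, and by lower semicontinuity of $\supp$ (the last Proposition of Section 3, applied with $\M_T(X)\subseteq\M(X)$), $\supp(\mu)\subseteq \liminf\supp(\Phi(x_n))\subseteq S\subseteq\P(x)$, so $\mu$ is an invariant measure carried by the uniquely ergodic subsystem $\P(x)$; hence $\mu=\Phi(x)$. Since every subsequential weak-star limit of $\Phi(x_n)$ equals $\Phi(x)$, we conclude $\Phi(x_n)\to\Phi(x)$, i.e.\ $\Phi$ is continuous. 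Item (4) then follows by combining item (3) with item (2): continuously strictly pointwise ergodic means continuously pointwise ergodic and semi-simple; by item (3) this is partitionable, $\P$ \usc, and semi-simple; partitionable plus semi-simple is strictly partitionable by (the argument of) item (2); and conversely strictly partitionable is pointwise ergodic and semi-simple, hence partitionable, and adding ``$\P$ is \usc'' gives continuously pointwise ergodic plus semi-simple, which is continuously strictly pointwise ergodic.

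For item (5): assume \xt\ is continuously pointwise ergodic, so by item (3) it is partitionable with $\P$ \usc. Then $\Phi:X\to\mtxe$ is continuous and surjective onto $\mtxe$ (every ergodic measure has a generic point — for instance by the ergodic theorem, a.e.\ point of its support is generic for it), so $\mtxe=\Phi(X)$ is the continuous image of the compact set $X$, hence compact. I expect the main obstacle to be the converse direction of item (3): extracting from upper semicontinuity of $\P$ alone the continuity of the \emph{measure-valued} map $\Phi$, rather than merely of the set-valued atom map. The key is the interplay between the two semicontinuities — \usc\ of $\P$ controls the atoms from above, lower semicontinuity of $\supp$ controls the supports of the limit measures from below — which pins down any subsequential limit of $\Phi(x_n)$ to lie in the (uniquely ergodic, hence singleton) simplex $\M_T(\P(x))$. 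One should also be slightly careful in item (1) that unique ergodicity of a closed invariant set on which every point is generic for one and the same ergodic measure really does hold: if $\mu,\mu'$ were distinct invariant measures on $\P(x)$, then picking ergodic $\mu''\ne\Phi(x)$ in the decomposition of one of them, a generic point for $\mu''$ (which exists in $\supp(\mu'')\subseteq\P(x)$) would contradict $\Phi\equiv\Phi(x)$ on $\P(x)$.
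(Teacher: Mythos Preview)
Your proposal is correct and follows the paper's approach essentially verbatim for items (1), (3), (4), and (5); in particular, the converse direction of (3) --- which you rightly identify as the crux --- uses exactly the same interplay between upper semicontinuity of $\P$ (forcing the limit atom into $\P(x)$) and lower semicontinuity of $\supp$ (forcing $\supp(\mu)$ into that limit) that the paper uses.

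The one genuine gap is in item (2). You assert that in a minimal set $M$ where every point is generic for some ergodic measure, the set is uniquely ergodic ``since every point being generic for a fixed measure forces uniqueness.'' But your preceding sentence only shows that each point is generic for \emph{some} ergodic measure with full support $M$; it does not show that all points are generic for the \emph{same} measure. A priori, a minimal set could carry several ergodic measures, each with full support, and different points could be generic for different ones --- ruling this out is precisely the nontrivial content. The paper does not prove this implication either; it cites \cite[Proposition~3.2]{KW} for the fact that a pointwise ergodic minimal system is uniquely ergodic. Your sketch should do the same, or supply the (not entirely obvious) argument that the set of points generic for a given ergodic measure, being invariant and nonempty, must be dense, and then exploit this to derive a contradiction from the existence of two such measures.
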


\begin{proof} (1) A partitionable system splits into uniquely ergodic subsystems and each of these subsystems is pointwise ergodic (see \cite{O}). So the whole system is pointwise ergodic as well. The atoms of $\P$ coincide with the uniquely ergodic subsystems, so they are closed. The converse implication is obvious: the atoms of $\P$, if closed, are uniquely ergodic subsystems.

(2) A strictly partitionable system is pointwise ergodic by (1), and semi-simple by definition. If a semi-simple system is pointwise ergodic then so is its every minimal subsystem. A pointwise ergodic minimal system is uniquely ergodic (see \cite[Proposition 3.2]{KW}), which implies strict partitionability.

(3) The fact that continuity of $\Phi$ implies closed atoms and upper semicontinuity of $\P$ follows directly from Proposition \ref{quotient} (1). Suppose that $\P$ has closed atoms and is \usc, and consider a \sq\ $(x_n)_{n\ge1}$ of points in $X$ converging to a point $x\in X$. Denote $\mu_n=\Phi(x_n)$, $\mu=\Phi(x)$, and let $A_n$ and $A$ denote the atoms of $\P$ containing $x_n$ and $x$, respectively. By passing to a sub\sq, we may assume that $\mu_n$ converge to some \im\ $\mu'\in\mtx$ and that $A_n$ converge in $2^X$ to some compact set $A'\subset X$. By upper semicontinuity of $\P$, we have $A'\subset A$. On the other hand, by lower semicontinuity of the set-valued function $\supp$ on $\mtx$, we also have $\supp(\mu')\subset A'$, which implies that $\mu'$ is carried by the atom $A$. Since $A$ carries only one \im\ $\mu=\Phi(x)$, we obtain that $\mu'=\mu$, which concludes the proof of continuity of $\Phi$.

(4) is (3) applied to semi-simple systems.

(5) is obvious: a continuous image of a compact space is compact.
\end{proof}

In addition to the notation (I)--(VI), let us denote by (P) the condition that $\P$ has closed atoms and is \usc, (equivalently, that $\Phi$ is continuous), by~(B) the property that $\mtxe$ is compact (in other words, that $\mtx$ is a Bauer simplex\footnote{A \emph{Bauer simplex} is a simplex whose set of extreme points is closed.}), and by (S) the condition that the set-valued function $\supp$ is continuous on $\mtxe$.

The property (P) has a special interpretation, which follows immediately from Proposition \ref{quotient} (2):

\begin{prop}
Let \xt\ be a pointwise ergodic \ds. The partition $\P$ has closed atoms and is \usc\ if and only if there exists a \tl\ factor $\pi:X\to Y$ from \xt\ to a system \ys\ such that $S$ is the identity on $Y$, and each fiber $\pi^{-1}(y)$ is uniquely ergodic. In this case, for each \im\ $\mu\in\mtx$, $\pi$ coincides (up to measure) with the \mic\ factor map from \xmt\ onto the sigma-algebra of \inv\ sets.
\end{prop}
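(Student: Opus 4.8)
The plan is to read the factor $\pi$ off the partition $\P$ by means of Proposition~\ref{quotient}, and then, one \im\ at a time, to identify the $\sigma$-algebra it generates with the $\sigma$-algebra of $T$-\inv\ sets. For the forward implication, assume $\P$ has closed atoms and is \usc. Observe first that each atom of $\P$ is $T$-invariant, because a point and its $T$-image generate the same measure (the two Ces\`aro averages differ by a term tending to $0$), and that each atom $A=\Phi^{-1}(\mu)$ is uniquely ergodic: any ergodic measure $\nu$ carried by $A$ admits, by the pointwise ergodic theorem, a point of $A$ generic for $\nu$, and that point is also generic for $\mu$, whence $\nu=\mu$; so by ergodic decomposition $\mu$ is the only \im\ carried by $A$. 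Now Proposition~\ref{quotient}(2) realizes $\P$ as the fiber partition of a continuous surjection $f:X\to Y$ onto a compact metric space $Y$; invariance of the atoms gives $f\circ T=f$, so setting $\pi=f$ and letting $S$ be the identity on $Y$ turns $\pi$ into a \tl\ factor from \xt\ onto \ys\ whose fibers are exactly the (uniquely ergodic) atoms of $\P$.

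For the converse, suppose such a $\pi$, $Y$ and $S=\id$ are given. Each fiber $F_y=\pi^{-1}(y)$ is closed, and $T$-invariant because $\pi\circ T=\pi$; being uniquely ergodic, it carries a single \im\ $\mu_y$, for which --- by Oxtoby's theorem --- \emph{every} point of $F_y$ is generic. Hence $\Phi$ equals $\mu_y$ on all of $F_y$, so $\Phi=\Psi\circ\pi$ for a suitable $\Psi$, and each fiber of $\pi$ is contained in an atom of $\P$. Conversely, if $\Phi(x)=\Phi(x')$ then $\mu_{\pi(x)}=\mu_{\pi(x')}$; since $\mu_y$ is carried by $F_y$ and distinct fibers are disjoint, this forces $\pi(x)=\pi(x')$. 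Thus the atoms of $\P$ coincide with the fibers of $\pi$, and Proposition~\ref{quotient}(1) gives that $\P$ has closed atoms and is \usc.

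For the closing assertion, fix $\mu\in\mtx$ and disintegrate $\mu=\int_Y\mu^y\,d(\pi_*\mu)(y)$ along $\pi$. Since $T$ preserves $\mu$ and carries each fiber $F_y$ onto itself, essential uniqueness of the disintegration forces $T_*\mu^y=\mu^y$ for $(\pi_*\mu)$-almost every $y$; unique ergodicity of $F_y$ then identifies $\mu^y$ with the ergodic measure $\mu_y$. Consequently $\pi$ agrees, up to $\mu$-null sets, with the \mic\ factor map from \xmt\ onto the $\sigma$-algebra of $T$-\inv\ sets: that $\sigma$-algebra contains $\pi^{-1}$ of every Borel subset of $Y$ because $\pi\circ T=\pi$, while conversely any $T$-\inv\ Borel set $A$ satisfies $\mu^y(A)\in\{0,1\}$ for a.e.\ $y$, hence coincides mod $\mu$ with $\pi^{-1}(\{y:\mu^y(A)=1\})$. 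I expect this last paragraph to be the only delicate point --- one must be careful with the measurable selection $y\mapsto\mu^y$, the $(\pi_*\mu)$-a.e.\ identity $T_*\mu^y=\mu^y$ obtained from essential uniqueness of disintegrations, and the resulting mod-$\mu$ description of the $\sigma$-algebra of \inv\ sets. The genuinely dynamical content --- the equivalence between unique ergodicity of the fibers and $\P$ having closed atoms and being \usc\ --- is soft, using only Oxtoby's theorem, the pointwise ergodic theorem, and Proposition~\ref{quotient}.
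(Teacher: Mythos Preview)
Your proof is correct and follows exactly the route the paper indicates: the paper does not give a detailed argument but simply remarks that the statement ``follows immediately from Proposition~\ref{quotient}~(2)'', which is precisely the tool you invoke for the forward direction, with Proposition~\ref{quotient}~(1) doing the converse. Your additional verification that the fibers of $\pi$ actually coincide with the atoms of $\P$ (rather than merely refine them), and your careful treatment of the closing assertion via disintegration and essential uniqueness, supply the details the paper leaves to the reader.
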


We continue by establishing more equivalences and implications.

\begin{prop}\label{phitos}\emph{(VI)$\iff$(IV)$\wedge$(B)$\wedge$(S)}.\hfill\break
A system \xt\ is continuously strictly pointwise ergodic if and only if it is strictly partitionable, $\mtxe$ is compact, and the set-valued function $\supp:\mtxe\to2^X$ is continuous. The partition $\P$ is then continuous.
\end{prop}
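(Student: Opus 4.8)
The plan is to establish the equivalence by proving the two implications separately, then deduce continuity of $\P$ as a consequence. For the forward direction, assume \xt\ is continuously strictly pointwise ergodic. By definition this already gives us semi-simplicity, and by Proposition \ref{red}(2) together with (1) it is strictly partitionable (indeed pointwise ergodic with closed-atom partition $\P$). Since $\Phi$ is continuous, Proposition \ref{red}(5) gives that $\mtxe$ is compact, i.e.\ (B) holds. The only real work is (S): we must show $\supp:\mtxe\to 2^X$ is continuous. Since this function is always \lsc\ (Proposition on supports), it suffices to prove upper semicontinuity. Here is where we exploit the structure: in a strictly partitionable system, for each ergodic $\mu$ the support $\supp(\mu)$ is exactly the atom $A$ of $\P$ on which $\mu$ is carried, and $A$ is minimal (it is a strictly ergodic subsystem). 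So $\supp = \Phi^{-1}$ in the sense that $\supp(\mu)$ is the $\P$-atom mapped by $\Phi$ to $\mu$. Now take $\mu_n\to\mu$ in $\mtxe$ with $\supp(\mu_n)\to S$ in $2^X$; pick any $x_n\in\supp(\mu_n)$, pass to a convergent subsequence $x_n\to x$. By continuity of $\Phi$, $\Phi(x)=\lim\Phi(x_n)=\lim\mu_n=\mu$, so $x$ lies in the atom $\supp(\mu)$. Moreover the atom containing $x_n$ is precisely $\supp(\mu_n)$, so by upper semicontinuity of $\P$ (part of hypothesis (P), which holds by Proposition \ref{red}(4)) the limit $S$ is contained in the atom containing $x$, namely $\supp(\mu)$. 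This is exactly upper semicontinuity of $\supp$, so (S) holds.

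For the reverse direction, assume (IV), (B), and (S). Strict partitionability already gives semi-simplicity and pointwise ergodicity (Proposition \ref{red}(2)), so by Proposition \ref{red}(4) it remains only to show that $\P$ has closed atoms and is \usc, i.e.\ property (P); strict partitionability already guarantees the atoms are closed (they are the strictly ergodic subsystems). So the task reduces to: show $\P$ is \usc. Let $A_n$ be a \sq\ of atoms of $\P$ converging in $2^X$ to a compact set $Z$. Each $A_n = \supp(\mu_n)$ for a unique ergodic $\mu_n\in\mtxe$ (here we use that the atoms are the strictly ergodic subsystems, each being the support of its unique measure). By compactness of $\mtxe$ (property (B)), pass to a subsequence with $\mu_n\to\mu\in\mtxe$. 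By continuity of $\supp$ on $\mtxe$ (property (S)), $A_n = \supp(\mu_n)\to\supp(\mu)$, so $Z = \supp(\mu)$, which is the atom of $\P$ over $\mu$. Hence $Z$ is itself an atom, and in particular contained in an atom; this proves upper semicontinuity of $\P$, giving (P) and thus (VI) via Proposition \ref{red}(4).

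Finally, continuity of $\P$ follows: we have just shown $\P$ is \usc\ (either from the hypothesis (VI) via Proposition \ref{red}(4), or re-derived above), and for lower semicontinuity, note that if $A_n\to Z$ with $A_n = \supp(\mu_n)$, then passing to a convergent subsequence $\mu_n\to\mu$ and using \lsc\ of $\supp$ (always true) gives $\supp(\mu)\subset\lim A_n = Z$; but $Z$ equals some atom $\supp(\mu)$ by the argument above, so in fact $Z = \supp(\mu)$, forcing $Z$ to both contain and equal an atom. Combined with upper semicontinuity, $\P$ is continuous.

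The main obstacle I anticipate is the bookkeeping in the forward direction's proof of (S): one must be careful that $x_n \in \supp(\mu_n)$ genuinely forces the $\P$-atom of $x_n$ to be $\supp(\mu_n)$ — this uses that in a \emph{strictly} partitionable system each atom is minimal and uniquely ergodic, so its unique invariant measure has full support on the atom, hence $\supp(\mu_n)$ is the whole atom, not a proper subset. Without semi-simplicity this identification can fail (the support of the unique measure of a uniquely ergodic subsystem can be a proper minimal subset), which is precisely why the hypothesis (II) is essential and why the argument does not collapse to a triviality.
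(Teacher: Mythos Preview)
Your proof is correct and follows essentially the same route as the paper's. The only differences are in packaging: for the reverse direction the paper observes in one line that $\P$ is the partition by images of the continuous map $\supp$ on the compact set $\mtxe$ and hence continuous (invoking Proposition~\ref{banal}), whereas you unpack this by hand; and in the forward direction for (S) the paper argues directly that any limit point of points $x_n\in A_n$ lies in $A$, while you take the slightly longer path of first landing $x$ in $\supp(\mu)$ and then invoking upper semicontinuity of $\P$ to pull $S$ in as well---both valid, yours just one step more indirect.
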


\begin{proof}
Assume that \xt\ is continuously strictly pointwise ergodic. We already know that then $\mtxe$ is compact. Let $(\mu_n)_{n\ge1}$ be a \sq\ of ergodic measures converging to an ergodic measure $\mu$. Let $A_n$ and $A$ denote the \tl\ supports of $\mu_n$ and $\mu$, respectively. Consider a \sq\ of points $x_n\in A_n$ converging to a point $x\in X$. Since $\Phi$ is continuous, the measures $\mu_n=\Phi(x_n)$ converge to $\mu=\Phi(x)$, i.e.\ $x$ is generic for $\mu$. Since \xt\ is strictly partitionable, this implies that $x\in\supp(\mu)=A$. We have shown that $\supp$ is \usc\ on $\mtxe$. Since $\supp$ is \lsc\ on $\M(X)$, we have proved continuity of $\supp$ on $\mtxe$.

Conversely, assume that \xt\ is strictly partitionable, $\mtxe$ is compact, and $\supp$ is continuous on $\mtxe$. Then $\P$ equals the partition by images of the continuous set-valued function $\supp$ defined on a compact space, so it is continuous, in particular \usc. Proposition \ref{red} (4) now implies that \xt\ is continuously strictly pointwise ergodic.
\end{proof}

We are interested in two other properties of \tl\ \ds s, which have seemingly a much more ``ergodic-theoretic'' flavor than all the ``\tl'' properties discussed so far. Nonetheless, we will be able to provide their \tl\ charactarizations.
Both notions have been introduced in \cite{H}, but the respective properties seem to originate in a much earlier work \cite{O}.

\begin{defn}\label{uni}\hfill
\begin{itemize}
\item A \tl\ \ds\ \xt\ is called \emph{uniform} if, for every continuous function $f:X\to\R$, the Ces\`aro means
$$
A^f_n(x)=\frac1n\sum_{i=0}^{n-1}f(T^ix)
$$
converge uniformly on $X$.
\item A system \xt\ which is both uniform and semi-simple is called \emph{strictly uniform}.
\end{itemize}
\end{defn}

Clearly, in a uniform system, the limit function $\tilde f=\lim_n A^f_n$ is continuous.
As we have already mentioned several times, uniquely ergodic systems are uniform. We have the following nearly obvious implication:

\begin{prop}\label{un}
Any uniform system is continuously pointwise ergodic. Any strictly uniform system is continuously strictly pointwise ergodic.
\end{prop}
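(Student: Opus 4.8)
The plan is to verify the two implications in turn, deriving everything from Definition \ref{uni} and the previously established Propositions. For the first statement, suppose \xt\ is uniform. First I would check pointwise ergodicity: fix $x \in X$. For every continuous $f$, the Ces\`aro means $A^f_n(x)$ converge (to $\tilde f(x)$, where $\tilde f = \lim_n A^f_n$ is the continuous limit function), so the sequence of empirical measures $\frac1n\sum_{i=0}^{n-1}\delta_{T^ix}$ converges weak-star to a limit measure $\mu_x$, which is automatically $T$-invariant and satisfies $\int f\,d\mu_x = \tilde f(x)$. It remains to see $\mu_x$ is ergodic. The clean way is to note that $\tilde f$ is itself $T$-invariant (since $A^f_n(Tx) - A^f_n(x) = \frac1n(f(T^nx) - f(x)) \to 0$ uniformly), hence constant on the support of each ergodic component; combined with uniformity this should force $\mu_x$ to be a single ergodic measure rather than a nontrivial mixture — concretely, if $\mu_x = \int \nu\, d\lambda(\nu)$ were a nontrivial decomposition into ergodics, one picks $f$ separating two of the components and contradicts the fact that $\tilde f$ takes a single value at the single point $x$. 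Actually the most economical argument: the function $x \mapsto \tilde f(x) = \int f\, d\mu_x$ is continuous and $T$-invariant, so it is measurable with respect to the invariant $\sigma$-algebra for every invariant measure; as $f$ ranges over a countable dense set of continuous functions this shows $x \mapsto \mu_x$ is measurable w.r.t.\ the invariant $\sigma$-algebra of any invariant measure, hence (by the ergodic decomposition) $\mu_x$ is the ergodic component of $x$ for a.e.\ $x$ with respect to every invariant measure, and since this holds for \emph{every} $x$ by the everywhere convergence, $\mu_x$ is ergodic for every $x$. So \xt\ is pointwise ergodic with $\Phi(x) = \mu_x$.

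Next I would show $\Phi$ is continuous. This is where uniformity does real work beyond mere pointwise convergence. Fix a countable dense set $\{f_k\}$ in $C(X)$; it suffices to show $x \mapsto \int f_k\, d\Phi(x) = \tilde f_k(x)$ is continuous for each $k$, since weak-star convergence of measures is tested on a dense set. But $\tilde f_k = \lim_n A^{f_k}_n$ is a uniform limit of continuous functions, hence continuous. As $k$ ranges over a dense set this gives continuity of $\Phi$ (a sequence $x_n \to x$ with $\Phi(x_n) \to \mu'$ forces $\int f_k\,d\mu' = \lim_n \tilde f_k(x_n) = \tilde f_k(x) = \int f_k\,d\Phi(x)$ for all $k$, so $\mu' = \Phi(x)$; compactness of $\mtx$ closes the argument). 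Hence \xt\ is continuously pointwise ergodic, proving the first statement.

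For the second statement, if \xt\ is strictly uniform it is by definition uniform and semi-simple; by the first part it is continuously pointwise ergodic, and being also semi-simple it is continuously strictly pointwise ergodic by definition (VI). I do not expect a serious obstacle here; the only delicate point is the ergodicity of $\mu_x$ in the first part, and the main obstacle is organizing that argument cleanly — the risk is circularity if one invokes results that already presuppose pointwise ergodicity. The safest route is the one sketched above via the continuity and $T$-invariance of the limit functions $\tilde f$ together with the ergodic decomposition, which uses only the pointwise ergodic theorem and elementary properties of the weak-star topology, both available here.
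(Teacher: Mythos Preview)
Your argument for the continuity of $\Phi$ is correct and in fact more direct than the paper's. The paper instead observes that the fiber partition $\P$ of $\Phi$ coincides with the common refinement, over all continuous $f$, of the fiber partitions of the continuous limit functions $\tilde f$, and then invokes Proposition~\ref{quotient}~(1),~(3) together with Proposition~\ref{red}~(3) to conclude that $\P$ is \usc\ and hence that $\Phi$ is continuous. Your route---testing weak-star convergence against a countable dense family in $C(X)$ and using that each $\tilde f_k$ is a uniform limit of continuous functions---bypasses the partition machinery entirely and is arguably cleaner. The second statement is handled identically in both approaches.

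There is, however, a genuine gap in your ergodicity argument for $\mu_x$. Your first attempt (``pick $f$ separating two ergodic components and contradict that $\tilde f$ takes a single value at $x$'') does not work: there is no contradiction in $\tilde f(x)$ being the barycentric average $\int(\int f\,d\nu)\,d\lambda(\nu)$. Your second attempt correctly deduces that for any invariant $\nu$, the map $y\mapsto\mu_y$ agrees $\nu$-a.e.\ with the ergodic decomposition map, but the clause ``since this holds for \emph{every} $x$ by the everywhere convergence, $\mu_x$ is ergodic for every $x$'' is a non sequitur: the everywhere \emph{definition} of $\mu_x$ does not upgrade an a.e.\ statement to an everywhere one. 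The missing observation is that $y\mapsto\mu_y$ is continuous (you proved this) and $T$-invariant, hence constant on each orbit closure $\overline{O(x)}$; in particular $\mu_y=\mu_x$ for all $y\in\supp(\mu_x)$. Applying your a.e.\ statement with $\nu=\mu_x$ then gives that $\mu_x$ is ergodic. Equivalently and more simply: since $\tilde f\equiv\tilde f(x)$ on $\overline{O(x)}$, every invariant measure $\nu$ supported there satisfies $\int f\,d\nu=\int\tilde f\,d\nu=\tilde f(x)$, so $\overline{O(x)}$ is uniquely ergodic and $\mu_x$ is its unique (hence ergodic) invariant measure. The paper, for its part, dispatches pointwise ergodicity with a single ``Clearly'', so the step is easy to underestimate.
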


\begin{proof}
Clearly, a uniform system \xt\ is pointwise ergodic. It is now crucial to observe that the partition $\P$ equals the common refinement over all continuous function $f:X\to\R$ of the fiber partitions of the limit functions~$\tilde f$. Now, by Proposition \ref{quotient} (1) and (3), $\P$ is \usc, and by
Proposition \ref{red} (3), \xt\ is continuously pointwise ergodic. The statement about strictly uniform systems follows from the above applied to semi-simple systems.
\end{proof}

The second part of Proposition \ref {un} places strict uniformity at the top of the hierarchy of properties discussed in this paper, as the most restrictive one.
A notable role of strict uniformity in the theory of dynamical systems is sanctioned by two theorems, the first of which is the celebrated Jewett--Krieger Theorem (\cite{J} and \cite{K}), the second, although vastly generalizes the first one, is much less commonly known.

\begin{thm}{\rm (Jewett--Krieger\footnote{Jewett showed this assuming weak mixing, Krieger relaxed this assumption to ergodicicty.}, 1970)}
Every ergodic \mpt\ \xsmt\ has a strictly ergodic \tl\ model.
\end{thm}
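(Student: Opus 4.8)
The plan is to build the strictly ergodic model by the classical route of \emph{coding}: represent the abstract system as a subshift over a finite alphabet via a generating partition, and then refine the coding to force unique ergodicity while preserving minimality. First I would use that every ergodic measure-preserving system $(X,\Sigma,\mu,T)$ on a standard Lebesgue space is measure-isomorphic to a shift system obtained from a countable (and, by Krieger's finite generator theorem if the entropy is finite, finite; in general one approximates by a refining sequence of finite partitions) generating partition. Applying a partition $\mathcal P_0$ of $X$ into finitely many sets and reading off the $\mathcal P_0$-names of points $T^i x$ gives a measurable, $T$-equivariant, essentially injective map $X \to \{1,\dots,k\}^{\Z}$; the closure $Y_0$ of the image of a full-measure set, together with the shift $\sigma$, is a subshift carrying the push-forward measure $\bar\mu_0$, which is a symbolic model — but typically neither minimal nor uniquely ergodic.

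The core of the proof is the inductive tower/Rokhlin-lemma argument that upgrades $Y_0$ to a strictly ergodic model. I would construct a decreasing sequence of finite partitions $\mathcal P_0 \prec \mathcal P_1 \prec \cdots$ refining to the Borel $\sigma$-algebra mod $\mu$, where at stage $n$ one uses a Rokhlin tower of large height $h_n$ whose base has small measure, and one \emph{recodes} the $\mathcal P_{n-1}$-name of $\mu$-a.e.\ point by partitioning the phase space into finitely many very long ``blocks'' (the column words of the tower) so that: (i) every block occurs syndetically along a.e.\ orbit, which forces minimality of the resulting subshift in the limit, and (ii) the empirical frequency of each fixed word in $\mathcal P_{n-1}$-symbols is nearly constant across all sufficiently long windows, which is what yields uniform convergence of Birkhoff averages, i.e.\ unique ergodicity. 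One arranges the construction so that the symbolic factors $Y_n$ form an inverse system whose coding maps change $\mu$ only on sets of summable measure (a Borel--Cantelli / fast-Cauchy argument in the space of symbolic realizations), so the limit symbolic system $Y = \varprojlim Y_n$ carries a shift-invariant measure $\bar\mu$ measure-isomorphic to $\mu$; the syndeticity built in at every stage makes $(Y,\sigma)$ minimal, and the frequency-stabilization built in at every stage makes it uniquely ergodic, hence strictly ergodic. Finally $(Y,\sigma)$ with $\bar\mu$ is the desired strictly ergodic model.

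The main obstacle — and the genuinely delicate part — is carrying out step (ii): one must choose the tower heights $h_n$ and the recoding at stage $n$ so that the \emph{new} errors introduced do not destroy the frequency control already achieved at earlier stages, and so that the frequencies of \emph{all} words (not just those visible at the current level) are simultaneously being pinned down in the limit. Equivalently, one is building a single subshift in which, for every word $w$, the Birkhoff averages of $\mathbbm 1_{[w]}$ converge uniformly; a naive stage-by-stage construction can let the uniform-convergence modulus for low-level words deteriorate when high-level recodings are performed, so the estimates must be made with room to spare (geometrically decreasing error budgets, with the stage-$n$ perturbation small relative to the stage-$(n-1)$ tolerance). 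Once the bookkeeping is set up correctly, minimality is comparatively easy (it is essentially the statement that the base of each tower returns syndetically, which is automatic for a Rokhlin tower with positive-measure base under an ergodic transformation), and the isomorphism $\bar\mu \cong \mu$ follows from the fact that the coding maps converge in measure to an invertible map because the partitions generate. Having produced a strictly ergodic model in this way, the theorem is proved; the subsequent results in the paper (Hansel's theorem and the characterization of uniformity) will require iterating and globalizing this construction across the ergodic decomposition.
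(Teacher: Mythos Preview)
The paper does not prove this theorem; it is quoted as a classical result with references to Jewett \cite{J} and Krieger \cite{K}, and is used only as background for Hansel's theorem and for the paper's own characterization of uniformity. So there is no ``paper's proof'' to compare against.

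That said, your sketch is a reasonable outline of the classical Rokhlin-tower/block-coding proof, and the identification of the delicate point --- controlling word frequencies uniformly across all scales while later recodings are still being performed --- is accurate. Two places deserve more care. First, the infinite-entropy case cannot be handled by a finite generating partition (Krieger's generator theorem requires finite entropy); one either works with a refining sequence of finite partitions from the outset and builds the model as an inverse limit of finite-alphabet subshifts, or invokes a separate argument. Your parenthetical ``in general one approximates by a refining sequence'' is the right idea but needs to be the backbone of the construction, not an afterthought. Second, your claim that minimality is ``comparatively easy'' because Rokhlin-tower bases return syndetically conflates measure-theoretic recurrence in $(X,\mu,T)$ with topological minimality of the symbolic model $Y$: syndetic return of a positive-measure set holds only $\mu$-a.e., whereas minimality of $Y$ requires that \emph{every} allowed word appear with bounded gaps in \emph{every} point of $Y$. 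Ensuring this is exactly what the careful choice of tower heights and column words must accomplish, and it is intertwined with the unique-ergodicity estimates rather than independent of them.
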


As a corollary, we obtain that
\begin{itemize}
	\item \emph{every ergodic \mpt\ has a strictly uniform \tl\ model}.
\end{itemize}

The second theorem is due to G.\ Hansel \cite{H}, and generalizes the above to non-ergodic \mpt s:
\begin{thm}{\rm (Hansel, 1974)}
Every \mpt\ \xsmt\ has a strictly uniform \tl\ model.
\end{thm}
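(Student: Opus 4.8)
The plan is to build the required model by ``fibering'' the classical Jewett--Krieger construction over the ergodic decomposition of \xsmt, taking care that the combinatorial data of the construction --- Rokhlin tower heights, block lengths, error bounds --- are chosen \emph{uniformly} across the ergodic components, so that the resulting \tl\ system is not merely a disjoint union of strictly ergodic pieces but is genuinely uniform in the sense of Definition~\ref{uni}. First I would write the ergodic decomposition $\mu=\int_\Omega\mu_\omega\,d\lambda(\omega)$, where $(\Omega,\lambda)$ is a Lebesgue space and $\omega\mapsto\mu_\omega$ is the (essentially injective) assignment of ergodic components; discarding a $\lambda$-null set, one may assume the $\mu_\omega$ are pairwise distinct and, splitting off a countable family of periodic orbits (each already a strictly ergodic, uniform system), that $T$ is aperiodic on the part under consideration. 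Using a generating sequence of continuous functions one realizes $(X,\mu,T)$ (measure-theoretically) as an invariant measure on a subshift over a compact metric alphabet, so that each $\mu_\omega$ is described by a family of cylinder probabilities depending measurably on $\omega$.

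Next comes a \emph{parametrized} Jewett--Krieger construction. For a single ergodic $\mu_\omega$ the classical proof produces a strictly ergodic subshift $\bar X_\omega$ together with a coding isomorphism onto $(X,\mu_\omega,T)$, realized as an inverse limit of finite approximations built from a sequence of Rokhlin towers of heights $h_n\uparrow\infty$ with exceptional sets of measure $<\eps_n\downarrow0$. Two features must be arranged simultaneously. (a) \emph{Continuous dependence on the parameter}: after replacing $(\Omega,\lambda)$ by a compact metric model $\bar\Omega$ (say a Cantor set carrying a measure $\bar\lambda$ corresponding to $\lambda$) and quantizing the construction, the level-$n$ block alphabets and the coding rules at stage $n$ can be taken locally constant, hence continuous, in $\omega\in\bar\Omega$. (b) \emph{Uniform combinatorics}: by a relative Rokhlin lemma for the aperiodic automorphism $T$, a \emph{single} pair $(h_n,\eps_n)$ can be used for all components at once, so that every point $\bar x$ of $\bar X:=\bigcup_{\omega\in\bar\Omega}\bar X_\omega$ has its orbit decomposed, at scale $h_n$, into level-$n$ blocks of $\bar X_{\omega(\bar x)}$ interrupted by a fraction $<\eps_n$ of spacer symbols, with $h_n,\eps_n$ independent of $\bar x$.

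Granting the construction, the verification is short. \emph{Semi-simplicity}: each $\bar X_\omega$ is strictly ergodic, hence minimal (every level-$n$ block recurs in every point of $\bar X_\omega$ with bounded gaps), and by (a) the map $\omega\mapsto\bar X_\omega$ is continuous in the Hausdorff metric, so $\bar X=\bigcup_\omega\bar X_\omega$ is a \emph{closed} union of minimal sets. \emph{Uniformity}: given continuous $f:\bar X\to\R$, for $N\gg h_n$ the average $A^f_N(\bar x)$ differs from an average of the block averages of $f$ over the level-$n$ blocks composing the orbit segment of $\bar x$ by $O(\eps_n+h_n/N)$, and each block average lies within $o_n(1)$ of $\int f\,d\bar\mu_{\omega(\bar x)}$, all bounds uniform in $\bar x$ by~(b); letting $N\to\infty$ and then $n\to\infty$ shows that $(A^f_N)_{N\ge1}$ is uniformly Cauchy. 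Thus $(\bar X,\bar T)$ is strictly uniform, and since the barycenter $\int\bar\mu_\omega\,d\bar\lambda$ is carried by $\bar X$ while the coding isomorphisms assemble into an isomorphism with $\mu$, it is a model of \xsmt. (Consistently with Propositions~\ref{un} and~\ref{phitos}, this model is in particular continuously strictly pointwise ergodic, with $\mtxe$ compact and $\supp$ continuous --- conclusions weaker than uniformity.)

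The main obstacle is carrying out (a) and (b) \emph{together}: measurable dependence of a field of strictly ergodic models on the ergodic-decomposition parameter is by now routine, but the delicate point --- which is the combinatorial heart of Hansel's original argument \cite{H} --- is to upgrade this to \emph{continuous} dependence while keeping the tower data \emph{uniform} over all components, for it is precisely this uniformity that promotes the fiberwise uniform convergence of Ces\`aro means to a global one. Equivalently, one may package the whole construction as a relative Jewett--Krieger theorem over the invariant-$\sigma$-algebra factor $(\Omega,\lambda,\id)$ --- which carries the trivially strictly uniform \tl\ model $(\bar\Omega,\id)$ --- and the difficulty is the same.
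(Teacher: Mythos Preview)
The paper does not supply a proof of this theorem at all: it is stated with attribution to Hansel and the citation \cite{H}, and is used as background for the paper's own results (notably Theorem~\ref{ben}). There is therefore no ``paper's own proof'' against which to compare your attempt.

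Your sketch is a reasonable outline of the strategy that underlies Hansel's original argument --- a fibered Jewett--Krieger construction over the ergodic decomposition, with uniform Rokhlin-tower combinatorics --- and you correctly identify the crux: upgrading measurable dependence of the strictly ergodic models on the decomposition parameter to \emph{continuous} dependence while keeping the tower heights and error bounds \emph{uniform} across components. As you yourself acknowledge, the sketch does not carry out this step; it names it and defers to \cite{H}. So what you have written is not a proof but an accurate high-level roadmap to one, with the hard part explicitly left as a black box. Since the present paper treats Hansel's theorem as an imported result rather than something to be reproved, that is arguably appropriate here --- but be aware that nothing in your sketch, nor in this paper, substitutes for the combinatorial work in \cite{H}.
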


Hansel's theorem has the following remarkable interpretation: the class of strictly uniform systems, although the smallest (among the classes discussed in this paper), is still reach enough to capture ``the entire ergodic theory''.
\smallskip

We will denote uniformity by (U) and strict uniformity by (sU). The following theorem, which to our best knowledge has not been noted up to date, constitutes the main result of this paper. Combined with the equivalence (V)$\iff$(IV)$\wedge$(P), it characterizes the ``ergodic'' property of uniformity by means of ``topological organization'' of the uniquely ergodic subsystems.

\begin{thm}\label{ben}\emph{(U)$\iff$(V), (sU)$\iff$(VI)}.\hfill\break
A system \xt\ is uniform if and only if it is continuously pointwise ergodic. It is strictly uniform if and only if it is continuously strictly pointwise ergodic.
\end{thm}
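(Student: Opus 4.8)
\emph{Plan of proof of Theorem \ref{ben}.} The plan is to reduce both equivalences to the single implication (V)$\Rightarrow$(U). Proposition \ref{un} already gives (U)$\Rightarrow$(V) and (sU)$\Rightarrow$(VI), and since by definition (sU) is (U)$\wedge$(II) and (VI) is (V)$\wedge$(II), once (V)$\Leftrightarrow$(U) is proved, conjoining both sides with semi-simplicity (II) yields (sU)$\Leftrightarrow$(VI). So the whole task is to show that a continuously pointwise ergodic system \xt\ is uniform. Fix a continuous $f:X\to\R$ and set $\tilde f(x):=\int f\,d\Phi(x)$; then $\tilde f$ is continuous (it is $\Phi$ followed by the weak-star continuous functional $\mu\mapsto\int f\,d\mu$), and $A^f_n(x)\to\tilde f(x)$ for each $x$ because $x$ is generic for $\Phi(x)$. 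I need to promote this pointwise convergence to uniform convergence.

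I would argue by contradiction. If the convergence is not uniform, there are $\eps>0$, integers $n_k\to\infty$ and points $x_k\to x$ (passing to a subsequence) with $|A^f_{n_k}(x_k)-\tilde f(x_k)|\ge\eps$. Passing to a further subsequence, the empirical measures $\mathcal E_k:=\frac1{n_k}\sum_{i=0}^{n_k-1}\delta_{T^ix_k}$ converge weak-star to some $\mu'\in\M(X)$; this $\mu'$ is $T$-invariant because $\mathcal E_k-T_*\mathcal E_k=\frac1{n_k}(\delta_{x_k}-\delta_{T^{n_k}x_k})\to0$, and $A^f_{n_k}(x_k)=\int f\,d\mathcal E_k\to\int f\,d\mu'$. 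The goal is then to show $\mu'=\Phi(x)$, for then $\int f\,d\mu'=\tilde f(x)=\lim_k\tilde f(x_k)$, giving the contradiction $\eps\le 0$.

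To identify $\mu'$ I would use that, by Proposition \ref{red}(3), hypothesis (V) means \xt\ is partitionable and $\P$ has closed atoms and is \usc; hence the proposition above (the factor interpretation of (P)) gives a \tl\ factor $\pi:X\to Y$ onto a system on which $S=\id$, with $\pi\circ T=\pi$ and each fibre uniquely ergodic. Then $\pi_*\mathcal E_k=\delta_{\pi(x_k)}\to\delta_{\pi(x)}$, so $\pi_*\mu'=\delta_{\pi(x)}$, which forces $\mu'$ to be carried by the fibre $F:=\pi^{-1}(\pi(x))$ --- a closed $T$-invariant uniquely ergodic subsystem containing $x$. Consequently $\mu'$ is the unique invariant measure of $F$; and $\Phi(x)$, being $T$-invariant and supported on $\overline{\{T^ix:i\ge0\}}\subseteq F$, is that same measure, so $\mu'=\Phi(x)$ as required.

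The one genuinely non-formal point is the last one --- controlling the support of the limiting empirical measure $\mu'$ --- and this is precisely where upper semicontinuity of $\P$ (equivalently, continuity of $\Phi$) is used; the factor $\pi$ is merely a convenient vehicle for it. If one wished to avoid $\pi$, the same step can be done by hand: letting $A_k\ni x_k$, $A\ni x$ be the atoms of $\P$, pass to a subsequence with $A_k\to A'$ in $2^X$, use upper semicontinuity to get $A'\subseteq A$, and invoke the elementary fact that a weak-star limit of measures carried by Hausdorff-convergent compacta is carried by the limit set (portmanteau together with the $1$-Lipschitz dependence of $L\mapsto d(\cdot,L)$ on the Hausdorff metric) to conclude $\supp(\mu')\subseteq A'\subseteq A$. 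Everything else is bookkeeping.
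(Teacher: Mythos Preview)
Your argument is correct. The reductions at the start (to the single implication (V)$\Rightarrow$(U)) match the paper, but the proof of that implication is genuinely different.

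The paper argues \emph{directly}: for each $x_0$ it finds an $n_0$ with $|A^f_{n_0}(x_0)-\int f\,d\Phi(x_0)|<\eps/2$, uses joint continuity of $(x,\mu)\mapsto|A^f_{n_0}(x)-\int f\,d\mu|$ together with continuity of $\Phi$ to spread this to a neighborhood $U_0$, extracts a finite subcover $U_1,\dots,U_l$ with associated lengths $n_1,\dots,n_l$, and then, for any $n\gg\max_i n_i/\eps$, decomposes the orbit segment of length $n$ from an arbitrary $x$ into consecutive blocks of lengths $n_{i_0},n_{i_1},\dots$ (each block starting in the appropriate $U_{i_j}$) plus a short tail. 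Averaging the blockwise estimates gives $|A^f_n(x)-\tilde f(x)|<\eps$ for all $x$ simultaneously. Your proof instead proceeds by contradiction via a subsequential weak-star limit of empirical measures, and then pins down that limit using the identity-factor $\pi$ furnished by (P): $\pi_*\mathcal E_k=\delta_{\pi(x_k)}\to\delta_{\pi(x)}$ forces $\mu'$ to live on the uniquely ergodic fibre through $x$, hence $\mu'=\Phi(x)$. This is cleaner and more conceptual; the paper's block-decomposition is more hands-on and, in principle, yields an explicit uniform threshold $M=2\max_i n_i/\eps$ (a quantitative flavour your compactness argument does not provide). Your alternative route via Hausdorff convergence of atoms is also sound, since each $\mathcal E_k$ is supported on the invariant atom $A_k\ni x_k$.
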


\begin{proof} In view of Proposition \ref{un}, we only need to prove that continuously pointwise ergodic systems are uniform. So, assume that \xt\ is continuously pointwise ergodic. Fix any continuous function $f:X\to[0,1]$ and a point $x_0\in X$. Let $\mu_0=\Phi(x_0)$. Given $\eps>0$, there exists an $n_0=n(x_0)$ such that
$$
\left|A_{n_0}^f(x_0)-\int f\,d\mu_0\right|<\frac\eps2.
$$
The $|A_{n_0}^f(x)-\int f\,d\mu|$ is a doubly continuous function of $(x,\mu)$, so the inequality $|A_{n_0}^f(x)-\int f\,d\mu|<\frac\eps2$ holds for all measures $\mu$ in a sufficiently small neighborhood $V_0$ of $\mu_0$ and all points $x$ is a sufficiently small neighborhood $U_0$ of $x_0$. By continuity of $\Phi$, we can choose $U_0$ so small that $\Phi(U_0)\subset V_0$. Then we have
$$
\left|A_{n_0}^f(x)-\int f\,d\mu_{x}\right|<\frac\eps2,
$$
for all $x\in U_0$, where $\mu_{x}=\Phi(x)$. By compactness, $X$ is covered by finitely many neighborhoods $U_i$ created analogously (with the same $\eps$) for some points $x_i\in X$ ($i=1,2,\dots,l$). This cover defines finitely many numbers $n_i = n(x_i)$. Let $n$ be any integer larger than
$$
M=\frac{2\max\{n_1,\dots,n_l\}}\eps.
$$
Pick any $x\in X$. We can divide the forward orbit of $x$ of length $n$ into ``portions'', as follows: $x$ belongs to some $U_{i_0}$, so we choose $n_{i_0}$ as the length of first portion. Next, $T^{n_{i_0}}x$ belongs to some $U_{i_1}$, so we choose
$n_{i_1}$ as the length of second portion. Next, $T^{n_{i_0}+n_{i_1}}x$ belongs to some $U_{i_2}$, so we choose $n_{i_2}$ as the length of third portion. And so on, until we are left with a ``tail'' of some length of length $m<\max\{n_1,\dots,n_l\}$, starting at $T^{n_{i_0}+n_{i_1}+\cdots+n_{i_q}}x$ for some $q\in\N$. We have
$$
A_n^f(x) = \frac{n_{i_0}}n A_{n_{i_0}}x+\sum_{j=1}^q\frac{n_{i_j}}n A_{n_{i_j}}^f(T^{n_{i_0}+n_{i_1}+\cdots+n_{i_{j-1}}}x)+\frac mn A_m^f(T^{n_{i_1}+n_{i_2}+\cdots+n_{i_q}}x).
$$
Because all the points $T^{n_{i_j}}x$ are generic for the same measure $\mu_x$, we have
$$
\left|A_{n_{i_0}}^f(x)-\int f\,d\mu_x\right|<\frac\eps2,
$$
and for each $j=1,2,\dots,q$,
$$
\left|A_{n_{i_j}}^f(T^{n_{i_0}+n_{i_1}+\cdots+n_{i_{j-1}}}x)-\int f\,d\mu_x\right|<\frac\eps2,
$$
while the last term $\frac mn A_m^f(T^{n_{i_1}+n_{i_2}+\cdots+n_{i_q}}x)$ has absolute value smaller than $\frac\eps2$. By averaging, we obtain that the inequality
$$
\left|A_n^f(x)-\int f\,d\mu_x\right|<\eps
$$
holds for any $n>M$ uniformly for all $x\in X$, which implies uniformity.

The equivalence between strict uniformity and continuous strict pointwise ergodicity is the (just proved) equivalence between continuous pointwise ergodicity and uniformity, applied to semi-simple systems.
\end{proof}

The following diagram shows all the implications between the conditions discussed in this paper: (I)--(VI), (U), (sU), (S), (P) and (B):
\smallskip

$$
\begin{matrix}
&&\!\!\!\!\!\!\!\!\!\!\!\!\!\!\!\!\!\!\text{(IV)}\wedge\text{(B)}\wedge\text{(S)}\!\!\!\!\!\!\!\!\!\!\!\!\!\!&&\!\!\!\!\text{(I)}\wedge\text{(II)}\!\!\!\!\!&&
\\
&&\Updownarrow&&\Updownarrow&&
\\
&&\!\!\!\!\!\!\!\!\!\text{(IV)}\wedge\text{(P)}\!\!\!\!\!\!&&\!\!\!\!\!\!\!\!\!\!\!\!\!\text{(III)}\wedge\text{(II)}\!\!\!\!\!\!\!\!\!\!&&
\\
&&\Updownarrow&&\Updownarrow\scriptstyle{df}\!\!\!\!\!&&
\\
\text{(V)}\wedge\text{(II)}\!\!\!&\overset{df}\iff&\text{(VI=sU)}&\implies&\text{(IV)}&\implies&\text{(II)}
\\
&&\Downarrow&&\Downarrow&&\\
\text{(III)}\wedge\text{(P)}\!\!\!&\overset{df}\iff&\text{(V=U)}\, &\implies&\text{(III)}&\implies&\text{(I)}\\
&&\Downarrow&&&&\\
&&\text{(B)}&&&&
\end{matrix}
$$
\smallskip

By analogy to the conditions equivalent to (VI), one might ask: is (V) equivalent to (III)$\wedge$(B)$\wedge$(S)? This question has a negative answer: as we will show in examples, neither (III)$\wedge$(B)$\wedge$(S) implies (V) nor (V) implies (S).

\smallskip
Nonetheless, the conjunction (III)$\wedge$(B)$\wedge$(S) has a special meaning. Recall that in a uniquely ergodic system \xt, the unique minimal set $M$ produces a strictly ergodic (hence strictly uniform) subsystem $(M,T)$ carrying the same \im\ as \xt. Notice that if \xt\ is partitionable then the union of all minimal sets (regardless of whether it is closed or not) supports the same \im s as \xt. Thus, a natural question arizes: under what conditions does the union of minimal subsets of a partitionable system \xt\ produce a strictly uniform subsystem. Below we give a complete answer.

\begin{thm}\label{cut}
Let \xt\ be partitionable. Let $M$ denote the union of all minimal subsets of $X$. Then $M$ is closed and the system $(M,T)$ is strictly uniform if and only if $\mtxe$ is compact and the set-valued function $\supp:\mtxe\to2^X$ is continuous.
\end{thm}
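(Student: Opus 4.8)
The plan is to reduce the theorem to the already-established equivalences (VI)$\iff$(IV)$\wedge$(B)$\wedge$(S) (Proposition~\ref{phitos}) and (sU)$\iff$(VI) (Theorem~\ref{ben}), applied to the subsystem $(M,T)$, after setting up a dictionary between the invariant structure of $(M,T)$ and that of \xt. First I would record the structure of $M$. Since \xt\ is partitionable, by Proposition~\ref{red}(1) it is pointwise ergodic and the atoms of $\P$ are precisely the closed uniquely ergodic subsystems $A$ of $X$; each such $A$ contains a unique minimal set, namely $\supp(\mu_A)$, where $\mu_A$ is the unique element of $\mtxe$ carried by $A$. Conversely, every minimal subset $N$ of $X$ lies in exactly one atom $A$ (the atoms are disjoint, closed and invariant), hence $N=\supp(\mu_A)$. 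Thus $M=\bigcup_{\mu\in\mtxe}\supp(\mu)$, and $(M,T)$ is automatically semi-simple, being the union of the minimal sets $\supp(\mu)$, each of which is still minimal in $(M,T)$ because minimality of a subset depends only on its relative topology. I would also note the measure dictionary: since $\supp(\mu)\subset M$ for every $\mu\in\mtxe$, the ergodic decomposition shows every $T$-invariant measure on $X$ is carried by $M$, so restriction is an affine bijection $\mtx\to\mathcal M_T(M)$ taking $\mtxe$ onto $\mathcal M_T^{\mathsf e}(M)$ and compatible with $\supp$; and genericity of a point $y\in M$ for a measure $\mu$ is the same statement in \xt\ and in $(M,T)$, since continuous functions on $M$ extend to $X$, the orbit of $y$ stays in $M$, and $\mu$ is carried by $M$. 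Consequently, once $M$ is known to be closed, $(M,T)$ is pointwise ergodic (inheriting $\Phi$) and semi-simple, hence strictly partitionable by Proposition~\ref{red}(2).

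For the ``if'' direction, I would assume $\mtxe$ is compact and $\supp$ is continuous on $\mtxe$, and first show $M$ is closed: if $y_n\in\supp(\mu_n)$ with $\mu_n\in\mtxe$ and $y_n\to y$, pass to a subsequence along which $\mu_n\to\mu\in\mtxe$ (compactness) and $\supp(\mu_n)\to S$ in $2^X$; upper semicontinuity of $\supp$ gives $S\subset\supp(\mu)$, and $y\in S$ because $y_n\in\supp(\mu_n)\to S$, so $y\in\supp(\mu)\subset M$. (Only compactness of $\mtxe$ and the upper semicontinuity half of (S) are used here.) With $M$ closed, the dictionary gives that $(M,T)$ is strictly partitionable, that $\mathcal M_T^{\mathsf e}(M)=\mtxe$ is compact, and that $\supp:\mathcal M_T^{\mathsf e}(M)\to 2^M$ is continuous (its values lie in the closed subspace $2^M\subset 2^X$, on which the subspace topology is the Hausdorff topology of $M$). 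Proposition~\ref{phitos} then yields that $(M,T)$ is continuously strictly pointwise ergodic, and Theorem~\ref{ben} upgrades this to strict uniformity.

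For the ``only if'' direction, I would assume $M$ is closed and $(M,T)$ is strictly uniform. Theorem~\ref{ben} makes $(M,T)$ continuously strictly pointwise ergodic, whence Proposition~\ref{phitos} gives that $\mathcal M_T^{\mathsf e}(M)$ is compact and $\supp:\mathcal M_T^{\mathsf e}(M)\to 2^M$ is continuous. Using the identification $\mathcal M_T^{\mathsf e}(M)=\mtxe$ and composing with the continuous inclusion $2^M\hookrightarrow 2^X$, I get that $\mtxe$ is compact and $\supp:\mtxe\to 2^X$ is continuous, i.e.\ exactly (B)$\wedge$(S).

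The only delicate point I anticipate is the verification of the dictionary — that $M$ (when closed) carries all invariant measures, that ergodicity and genericity are intrinsic to $(M,T)$, and that a subset of $M$ is minimal in $M$ iff it is minimal in $X$ — but each of these is routine given partitionability and the ergodic decomposition. Everything beyond the dictionary is a direct invocation of Proposition~\ref{phitos} and Theorem~\ref{ben}.
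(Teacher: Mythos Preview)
Your proof is correct and follows essentially the same route as the paper: both reduce to the equivalence (sU)$\iff$(IV)$\wedge$(B)$\wedge$(S) applied to the subsystem $(M,T)$, after identifying the minimal subsets of $X$ with $\{\supp(\mu):\mu\in\mtxe\}$ and noting $\M_T^{\mathsf e}(M)=\mtxe$. Your write-up is more explicit about the ``dictionary'' (and your direct sequential argument for closedness of $M$ is a spelled-out version of the paper's one-line observation that $\supp(\mtxe)$ is compact in $2^X$), but the architecture is the same.
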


\begin{proof}
First notice that partitionability of \xt\ implies that the family of all minimal subsets coincides with $\supp(\mtxe)$, i.e.\ with the image of $\mtxe$ via $\supp$. Now assume that $\mtxe$ is compact and $\supp:\mtxe\to2^X$ is continuous. It follows that $\supp(\mtxe)$ is compact, hence closed in $2^X$. This easily implies that the union $M$ of all minimal sets is closed in $X$. The system $(M,T)$ satisfies partitionability (III) and semi-simplicity (II), so it is strictly pointwise ergodic (IV). It also satisfies (B) and (S). Since (IV)$\wedge$(B)$\wedge$(S)$\iff$(sU), $(M,T)$ is strictly uniform.

Conversely, assume that $M$ is closed and that the system $(M,T)$ is strictly uniform. Then $(M,T)$ satisfies (B) and (S), i.e.\ $\M^{\mathsf e}_T(M)$ is compact and the function $\supp:\M^{\mathsf e}_T(M)\to2^M\subset 2^X$ is continuous. By partitionability, we have $\M^{\mathsf e}_T(M)=\mtxe$, implying that the conditions (B) and (S) hold also for \xt.
\end{proof}

We remark, that without partitionability of \xt\ (even with pointwise ergodicity assumed instead), Theorem \ref{cut} loses its valuable interpretation. The conjunction (B)$\wedge$(S) still implies that $M$ is compact and whenever $(M,T)$ is pointwise ergodic, it is strictly uniform. However, without partitionability, there is no guarantee that $\M_T(M)=\mtx$ and $(M,T)$ cannot replace \xt\ in the role of a \tl\ model of some \mpt s. This happens for instance in the example given by Katznelson and Weiss in \cite{KW} (see below for more details). 

On the other hand, partitionability of \xt\ does not follow from the assumptions that \xt\ is pointwise ergodic, $M$ is compact, even if $(M,T)$ is strictly uniform and carries the same \im s as \xt. The simplest example here is $X$ being the two-point compactification of $\Z$, $X=\Z\cup\{-\infty,\infty\}$, with $Tx=x+1$. In this system $M=\{-\infty,\infty\}$ is closed and $(M,T)$ is trivially strictly uniform. Nonetheless, \xt\ is pointwise ergodic without being partitionable; the set of points generic for $\delta_\infty$ equals $\Z\cup\{\infty\}$ and is not closed.

\section{Examples} One of the first examples showing phenomena associated to the notions discussed in the preceding paragraph is given in \cite{KW}. The system \xt\ is pointwise ergodic but not partitionable. The set $\mtxe$ is homeomorphic to the unit interval by a map $t\mapsto\mu_t$, in particular $\mtxe$ is compact and uncountable. The \tl\ supports of the ergodic measure form a nested family of sets $E_t$ intersecting at a fixpoint $x_0$ (which is a unique minimal subset). It can be verified that also the set-valued map $t\mapsto E_t$ is continuous (into $2^X$), implying that $\supp$ is continuous on $\mtxe$. This example shows that (I)$\wedge$(B)$\wedge$(S)$\,\ \not\!\!\!\implies$(III).
\smallskip

We continue with six other examples. They are elementary, yet they resolve possible questions about other implications. The space $X$ in the first five of them is the same and consists of a \sq\ of circles $C_k$ ($k\ge 1$) converging to a circle $C$ (in the last example we add to this space one more circle $C'$). The circles $C_k$ and $C$ can be imagined as all having equal diameters and placed on parallel planes, or as concentric circles with radii of $C_k$ converging to the radius of $C$. On each circle $C_k$ we pick one point $c_k$ so that these points converge to a point $c\in C$. We will call the points $c_k$ and $c$ the \emph{origins} of the circles $C_k$ and $C$, respectively (they will be used in Examples \ref{ex2}, \ref{ex3} and \ref{ex6}).

\begin{exam}\label{ex1}
In this example, the transformation $T$ restricted to each circle $C_k$ (henceforth denoted by $T_k$) is an irrational rotation. The angles of the rotations $T_k$ tend to zero with $k$, so that $T$ restricted to $C$ is the identity map (see Figure \ref{fig1}).
\begin{figure}[h]
\includegraphics[width=12cm]{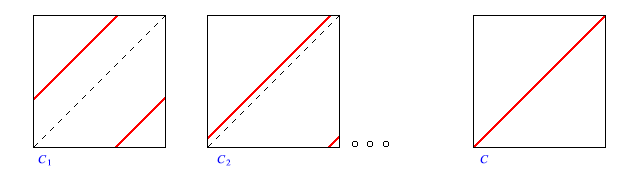}
\caption{}\label{fig1}
\end{figure}

Clearly, this system is strictly partitionable. The strictly ergodic subsystems are the circles $C_k$ and all single points on $C$. The set $\mtxe$ consists of the normalized Lebesgue measures $\lambda_k$ on the circles $C_k$ and the Dirac measures $\delta_x$ at all points $x\in C$. This set of measures is not closed in $\mtx$ (the measures $\lambda_k$ converge to the non-ergodic Lebesgue measure on $C$). So, the system  cannot be continuously pointwise ergodic and, by Theorem \ref{ben}, it is not uniform. It follows, that the partition $\P$ connot be \usc, and $\Phi:X\to\mtx$ cannot be continuous (all these negative properties can be seen directly, but we want show how our diagram works). On the other hand, the map $\supp$ is easily seen to be continuous, even on the closure of $\mtxe$, which consists of two disjoint compact sets: one containing the Lebesgue measures on all the circles (including $C$) and second, consisting of all Dirac measures at the points $x\in C$. This example shows that without (B), (IV)$\wedge$(S)$\,\ \not\!\!\!\implies$(VI) (not even (V)).
\end{exam}

\begin{exam}\label{ex2}
In the previous example we slightly change the transformations $T_k$. For each $k$ we pick an arc $I_k$ containing the origin $c_k$. The diameters of the arcs decrease to zero, so that the arcs $I_k$ converge to the singleton $\{c\}$. On each $I_k$ we ``slow down the motion'', meaning that we locally bring the graph of $T_k$ closer to the diagonal (see Figure \ref{fig2}). Since $T$ is not changed outside the arcs $I_k$, (and on each $I_k$ the map is even closer to identity than before), the maps $T_k$ converge, as before, to the identity map on $C$.  We can easily arrange the modifications so that each $T_k$ is conjugate to some irrational rotation (much slower than before). We let $\mu_k$ denote the unique \inv\ (and ergodic) measure supported by $C_k$. Each orbit in $C_k$ now spends much more time in $I_k$ than before, so $\mu_k(I_k)$ is larger than $\lambda_k(I_k)$. By slowing down strongly enough we can easily arrange that $\mu_k(I_k)\to 1$, implying that the measures $\mu_k$ converge to $\delta_c$.
\begin{figure}[h]
\includegraphics[width=12cm]{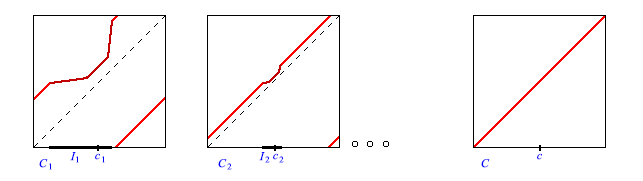}
\caption{}\label{fig2}
\end{figure}

The system is strictly partitionable with the partition $\P$ the same as in the previous example. Since this partition is not \usc, the system is not continuously pointwise ergodic. However, this time, the set of ergodic measures $\{\mu_k:k\ge 1\}\cup\{\delta_x:x\in C\}$ is closed in $\mtx$ (because $\mu_k\to\delta_c$), hence compact. Since (IV)$\wedge$(B) holds and (VI) does not, it must be (S) that fails (this can also be seen directly:  $\supp(\mu_k)=C_k\to C\neq\supp(\delta_c)$). This example shows that without (S), (IV)$\wedge$(B)$\,\ \not\!\!\!\implies$(VI) (not even (V)).
\end{exam}

\begin{exam}\label{ex3}This is the declared example showing that (III)$\wedge$(S)$\wedge$(B)$\,\ \not\!\!\!\implies$(V). Let the map $T_k$ on the circle $C_k$ fix the origin $c_k$ and move all other points, say, clockwise, but with the speed of the movement decaying as $k$ increases, so that on $C$ we have the identity map (see figure \ref{fig3}).

\begin{figure}[h]
\includegraphics[width=12cm]{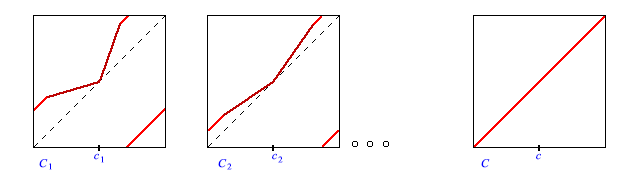}
\caption{}\label{fig3}
\end{figure}

Clearly, the system is partitionable, with the same partition $\P$ as in Examples \ref{ex1} and \ref{ex2}. Since $\P$ is not \usc, the system is not continuously pointwise ergodic.
The set of ergodic measures equals $\{\delta_{c_k}:k\ge1\}\cup\{\delta_x:x\in C\}$ and is compact, and the set-valued function $\supp:\mtxe\to2^X$ is obviously continuous.
\end{exam}

\begin{exam}\label{ex4}This is the other declared example showing that (V)$\,\ \not\!\!\!\implies$(S) (and some other failing implications).
The maps $T_k$ on the circles $C_k$ are modifications of one fixed irrational rotation. We can ``slow down the motion'' on each circle $C_k$ so that the resulting maps $T_k$ are conjugate to irrational rotations, and converge to a continuous map on $C$ which fixes just one point $c$ and moves all other points, say, clockwise (see Figure \ref{fig4}). Then $\delta_c$ is the unique \inv\ (and ergodic) measure supported by $C$. The unique invariant (and ergodic) measures $\mu_k$ supported by $C_k$ must accumulate at \im s supported by $C$, so they have no other choice than to converge to $\delta_c$.
\begin{figure}[h]
\includegraphics[width=12cm]{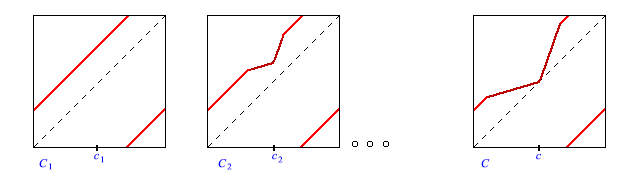}
\caption{}\label{fig4}
\end{figure}

The the system is partitionable but not semi-simple. The partition $\P$ into closed uniquely ergodic sets equals $\{C_k:k\ge 1\}\cup\{C\}$ and is continuous, so \xt\ is continuously pointwise ergodic. In particular, $\mtxe$ is compact (the last condition can also be seen more directly, by noting that $\mtxe=\{\mu_k:k\ge 1\}\cup\{\delta_c\}$). However, since the supports of $\mu_k$ are whole circles, while that of the limit measure $\delta_c$ is a singleton, the function $\supp:\mtxe\to 2^X$ is not continuous. This example shows that (V)$\,\ \not\!\!\!\implies$(II) and (V)$\,\ \not\!\!\!\implies$(S). This example also shows that although (VI) implies continuity of $\P$, (V) combined with this continuity still does not imply~(VI).
\end{exam}

\begin{exam}\label{ex5}Now we change the maps $T_k$ so that they all are the same as that on $C$ (see figure \ref{fig5}).

\begin{figure}[h]
\includegraphics[width=12cm]{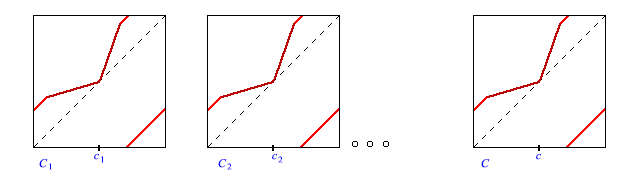}
\caption{}\label{fig5}
\end{figure}

Like in the previous example, the system is partitionable but not semi-simple, with $\P$ continuous, hence \xt\ is continuously pointwise ergodic. This time
the map $\supp:\mtxe\to2^X$ is continuous.  The example shows that (V) combined with (S) (and additionally with the continuity of $\P$) still does not suffice for (VI).
\end{exam}

\begin{exam}\label{ex6}In the above example, we can add to the space one more circle $C'$ that is tangent to $C$ at $c$. The dynamics on $C'$ is symmetric to that on $C$, so that all points on $C'$ are generic for $\delta_c$ and $C\cup C'$ is an atom of $\P$ (see figure \ref{fig6}).

\begin{figure}[h]
\includegraphics[width=5cm]{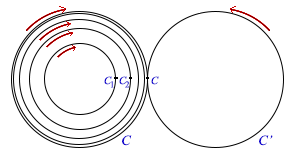}
\caption{}\label{fig6}
\end{figure}

So modified system has the same properties as in the preceding example, except that the partition $\P$ is now \usc\ not continuous. This example shows that (V) (contrary to (VI)) does not imply continuity of $\P$, even when combined with (S).
\end{exam}

\section*{Acknowledgements}
The authors thank Mariusz Lema\'nczyk for raising a question that triggered this research.

Tomasz Downarowicz is supported by
National Science Center, Poland (Grant No. 2018/30/M/ST1/00061) and
by the Wroc\l aw University of Science and Technology (Subsidy for 2020, budget no. 8201003902 MPK: 9130730000).

\end{document}